\def\div{\operatorname{div}}
\DeclareMathOperator{\sgn}{sgn}
\newcommand{\defeq}{\mathrel{\mathop:}=}
\newcommand{\eqdef}{\mathrel{\mathop=}:}
\newcommand{\eval}[2]{\langle #1,#2 \rangle}
\newcommand{\Eval}[2]{\bigl\langle #1,#2 \bigr\rangle}
\newcommand{\simplex}[2][0]{[#1\,...\,#2]}
\def\R{\mathbb{R}}
\newcommand{\somethingdef}[4]{{\left#1 {#2} \ \left| \ {#3} \vphantom{#2} \right. \right#4}}
\newcommand{\setdef}[2]{\somethingdef{\{}{#1}{#2}{\}}}
\newtheorem{theorem}{Theorem}[section]
\newtheorem{proposition}[theorem]{Proposition}
\newtheorem{lemma}[theorem]{Lemma}
\theoremstyle{definition}
\newtheorem{definition}[theorem]{Definition}
\newtheorem{example}[theorem]{Example}
\theoremstyle{remark}
\newtheorem{remark}[theorem]{Remark}
\newtheorem{notation}[theorem]{Notation}
\begin{document}
\title{Averaging Property of Wedge Product and Naturality in Discrete Exterior Calculus}
\author{Mark D. Schubel}
\email{mdschubel@gmail.com}
\address{Department of Physics, University of Illinois at Urbana-Champaign, 1110 West Green Street, Urbana, IL 61801}
\curraddr{Apple Inc., One Apple Park Way, Cupertino, CA 95014}
\author{Daniel Berwick-Evans}
\email{danbe@illinois.edu}
\address{Department of Mathematics, University of Illinois at Urbana-Champaign, 1409 West Green Street, Urbana, IL 61801}
\author{Anil N. Hirani*}\thanks{*Corresponding author, hirani@illinois.edu}
\email{hirani@illinois.edu}
\address{Department of Mathematics, University of Illinois at Urbana-Champaign, 1409 West Green Street, Urbana, IL 61801}
\thanks{It is a pleasure to dedicate this paper to Alain Bossavit on his 80th birthday. Alain's writings have been very influential to ANH. When ANH was a graduate student he had the honor of inviting Alain to a workshop at Caltech and has wonderful memories of the many discussions with Alain.}

\begin{abstract}
In exterior calculus on smooth manifolds, the exterior derivative and wedge product are natural with respect to smooth maps between manifolds, that is, these operations commute with pullback. In discrete exterior calculus (DEC), simplicial cochains play the role of discrete forms, the coboundary operator serves as the discrete exterior derivative, and the antisymmetrized cup product provides a discrete wedge product. We show that these discrete operations in DEC are natural with respect to abstract simplicial maps. A second contribution is a new averaging interpretation of the discrete wedge product in DEC. We also show that this wedge product is the same as Wilson's cochain product defined using Whitney and de Rham maps.

\bigskip
\noindent\textbf{Keywords:} Partial differential equations, nonlinearity, chain rule, pullback, morphisms, simplicial cochains, discrete differential forms, Whitney forms
\end{abstract}

\maketitle

\section{Introduction}
Discrete exterior calculus (DEC) builds a combinatorial version of exterior calculus on smooth manifolds with manifolds replaced by cell complexes (usually simplicial complexes)~\cite{Hirani2003, DeHiLeMa2005}. This is done by replacing the objects and operators of exterior calculus by discrete ones in a way that faithfully encodes expected algebraic identities, e.g., the Leibniz rule for the de~Rham differential. The Riemannian metric is encoded in DEC via a primal and dual cell complex that incorporate orthogonality, lengths, areas, volumes etc. 
There are other such frameworks with similar goals, with a prominent one being finite element exterior calculus (FEEC)~\cite{ArFaWi2010} in which differential forms are replaced by piecewise polynomial forms with certain continuity properties.

Like FEEC, for most of its existence DEC has been viewed as a framework for numerically solving partial differential equations (PDEs) on cell complexes. For example, FEEC, DEC and their antecedents have been used in computational electromagnetism~\cite{Bossavit1988a,Hiptmair1999}, elasticity~\cite{ArWi2002,ArFaWi2007, Li2018}, numerical relativity~\cite{Quenneville-Belair2015,Li2018}, fluid mechanics~\cite{HiNaCh2015,MoHiSa2016,PaGe2017,NiTeVo2017,JaAbMoSa2021,WaJaHiSa2023}, quantum electrodynamics~\cite{PhFaScTu2023} and many other areas of physics and geometry~\cite{ArHu2021}. 

Algebraic structures on simplicial cochains have also been studied for more theoretical purposes. In particular, the simplicial coboundary operator and discrete wedge product can be used to obtain combinatorial manifestations of various topological and geometric invariants e.g., see~\cite{Kervaire1953, Whitney1957, Sullivan1977, RaSu1976, Wilson2007, Dupont1978}. See~\cite{Wilson2007} for the context for these and other related references.

In this paper we promote a category-theoretic viewpoint for DEC that generalizes the situation in smooth geometry. Smooth exterior calculus takes place within the category whose objects are manifolds and morphisms are smooth maps between manifolds. Naturality of exterior calculus (in the category-theoretic sense) encodes fundamental structures, e.g., the chain rule. In DEC the objects are clearly simplicial complexes, but the appropriate morphisms analogous to smooth maps have not been spelled out previously. In the absence of topological and differentiable structure on the simplicial complexes (as is the case in DEC) it is not a priori clear what such morphisms should be. We propose abstract simplicial maps as discrete proxy for smooth maps by showing that discrete exterior derivative and wedge product commute with pullback by abstract simplicial maps. This mimics the analogous properties in the smooth case, including a discrete chain rule.

Roughly speaking, wedge products are required for PDEs with nonlinear terms. For example, nonlinearity may present itself as a product of functions, such as in the term $\div(\phi u)$ in two phase flow with $\phi$ indicating the phase and $u$ the velocity~\cite{WaJaHiSa2023}. Or it may arise indirectly, for example in the convective term $u\cdot \nabla u$ in Navier-Stokes equations for incompressible flow which leads to a wedge product via a Lie derivative. That is, $u\cdot \nabla u$ is the vector proxy for the form
$
\mathcal{L}_u u^\flat - \frac{1}2 d i_u u^\flat = i_u du^\flat + \frac{1}2 d i_u u^\flat\, ,
$
and contraction can be written in terms of wedge product, $i_X\alpha = \pm *(*\alpha\wedge X^\flat)$ for a form $\alpha$ and vector field $X$. The sign depends on the differential form degree. This is the formulation that was used in~\cite{MoHiSa2016}.

Piecewise polynomial approximations of wedge product have not yet appeared in the FEEC literature, except in the special case of wedge products used to define inner products of forms. A FEEC theory incorporating general wedge products will likely need to address the fact that degrees add under products of polynomials. Thus the finite element space of the product $\alpha \wedge \beta$ is different from that of the constituent factors $\alpha$ and $\beta$. One approach might be to convert a polynomial form to degrees of freedom (DOF) values followed by a combinatorial wedge product on the DOF. This can then be mapped to a shape function value. The discrete DEC wedge product is such a combinatorial product operation that might be useful in such constructions at least for the lowest degree Whitney forms used in FEEC. In fact, the DEC wedge product is closely related to the wedge product of Whitney forms as we show. Within DEC the old combinatorial product operator from~\cite{Hirani2003} has been recently used in the discretization of nonlinear terms in fluid mechanics PDEs~\cite{MoHiSa2016, WaJaHiSa2023}.

A second contribution of this paper is a satisfying interpretation of this old DEC discrete wedge product. This interpretation organizes the (complicated) combinatorics of the wedge product into conceptually simpler averages and products of values of cochains involved. With the coboundary operator interpreted as a difference operator and the averaging interpretation of wedge product, the metric independent parts of DEC are seen to be consisting of simple arithmetic operations, making DEC a useful tool that requires minimal mathematical background as compared with exterior calculus on differentiable manifolds.

\subsection*{Statement of results}

Propositions~\ref{prop:dnaturality} and~\ref{prop:wedge_naturality} prove the naturality property of the discrete exterior derivative and wedge product under pullback by abstract simplicial maps. Proposition~\ref{prop:averaging} gives a new interpretation of anti-symmetrized cup product which was defined in DEC in~\cite{Hirani2003} and which has been used in some physical applications. This new interpretation is a double averaging involving the two cochains involved. Proposition~\ref{prop:wilson} shows that the DEC wedge product is equal to a cochain product of Wilson~\cite{Wilson2007} and hence the averaging interpretation applies to Wilson's cochain product as well.

\section{Background: Discrete exterior calculus}\label{sec:DEC}
In this section we give a brief overview of DEC~\cite{Hirani2003}.  The input data is a simplicial complex $X$ with additional decorations and properties. The discrete notions of differential form, exterior derivative, and wedge product only depend on the underlying simplicial complex, and are defined using standard methods from simplicial algebraic topology. Incorporating features that depend on a metric (e.g., a discretization of the Hodge star operator) essentially requires that $X$ approximates a manifold. This assumption is appropriate given the desired applications: DEC has been used mostly as a method for solving PDEs on simplicial approximations of embedded orientable manifolds.  A discrete Hodge star construction involves a Poincar\'e dual complex of $X$ using circumcenters~\cite{Hirani2003} and is not relevant to this paper.

With the above in mind, below we will assume that $X$ arises as an approximation of an embedded manifold. In particular, each top dimensional simplex is embedded in $\mathbb{R}^N$ individually, and combinatorial data specifies how these are glued to each other. This may be presented by embedding the entire approximation of the manifold as a complex of dimension $m$ embedded in $\mathbb{R}^N$ for some $N \ge m$. A common example is a piecewise-linear (PL) approximation of a surface in $\mathbb{R}^3$. But the coordinate-independent aspect of DEC does not require such a global embedding. All the operations and objects are local to the simplices and their neighbors. In DEC, the top dimensional simplices of the simplicial approximation of an orientable manifold are oriented consistently and the lower dimensional simplices are oriented arbitrarily.

Thus the starting point for DEC is a simplicial complex $X$ which may be a triangulation approximation of $M$. For the results of this paper $X$ can be simply a triangulation of a manifold. Then let $C_k(X)$ denote the vector space of $k$-chains defined over $\R$, and $C^k(X)$ the corresponding space of $k$-cochains. Given a differential form $\alpha \in \Lambda^k(M)$, one obtains a $k$-cochain $\int_{\textvisiblespace }\alpha$ via integration over $k$-chains, i.e., the value of the de~Rham map~\cite{Dodziuk1976}. By Stokes theorem, the coboundary operator on cochains plays the role of discrete exterior derivative, denoted by~$d$ below. From standard algebraic manipulations, the cup product ($\smile$) plays the role of tensor product and the antisymmetrized cup product plays the role of a discrete wedge product ($\wedge$). For $\alpha \in C^k(X)$ the notation $\eval{\alpha}{[v_0,\ldots,v_k]}$ denotes evaluation of $\alpha$ on the oriented simplex $[v_0,\ldots, v_k]$. Often we will use $[0\ldots k]$ to label a generic oriented $k$-simplex in $X$. The following definitions of the discrete exterior derivative and discrete wedge product in DEC are from~\cite{Hirani2003}. Each is defined below on a simplex and extends by linearity to chains.

\begin{definition}[DEC exterior derivative]\label{defn:d}
    For a cochain $\alpha \in C^k(X)$, the \emph{discrete exterior derivative} $d\alpha\in C^{k+1}(X)$ is characterized by its evaluation $(k+1)$-dimensional simplices $\sigma$ as
    $\eval{d\alpha}{\sigma} := \eval{\alpha}{\partial \sigma}$ and extending by linearity to $C^{k+1}(X)$. Recall that if $\sigma =[0\ldots k+1]$ the boundary $\partial \sigma = \sum_{i=0}^{i=k+1} (-1)^i [0\ldots \widehat{i}\ldots k+1]$ where $\widehat{i}$ means missing vertex.
\end{definition}
\begin{definition}[DEC wedge product]\label{defn:wedge}
  Given cochains $\alpha \in C^k(X)$ and $\beta \in C^l(X)$ the \emph{wedge product} $\alpha \wedge \beta$ is characterized by its evaluation on $(k+l)$-simplices as
  \begin{equation} \label{eq:wedge}
    \eval{\alpha \wedge \beta}{\simplex{k+l}} =
    \dfrac{1}{(k+l+1)!}\sum_{\tau \in S_{k+l+1}} \sgn(\tau) \eval{\alpha \smile \beta}
    {\simplex[\tau(0)]{\tau(k+l)}}.\\
  \end{equation}
\end{definition}

\begin{example}[Wedge product on a triangle]\label{eg:wedge1-1}
Let $X$ be the oriented triangle $[012]$ and $\alpha,\beta \in C^1(X)$. Using Definition~\ref{defn:wedge}
\begin{multline}\label{eq:wedge012}
    \eval{\alpha\wedge \beta}{[012]} = \dfrac{1}6 
    \bigg[\eval{\alpha\smile \beta}{[012]} - \eval{\alpha \smile \beta}{[021]}
    -\eval{\alpha\smile \beta}{[102]} \\+ \eval{\alpha\smile \beta}{[120]}
    + \eval{\alpha\smile \beta}{[201]} - \eval{\alpha\smile \beta}{[210]}
    \bigg]\, . 
\end{multline}
Using the shorthand notation $\alpha_{ij}$ for $\eval{\alpha}{[ij]}$ and $\beta_{jk}$ for $\eval{\beta}{[jk]}$, terms like $\eval{\alpha\smile \beta}{[ijk]}$ above can be written as $\alpha_{ij} \beta_{jk}$ with an appropriate sign depending on the sign of the permutation corresponding to the ordering $i,j,k$ of vertices. Then~\eqref{eq:wedge012} is
\[
\eval{\alpha\wedge \beta}{[012]} = \dfrac{1}6 \big[\alpha_{01} \beta_{12} - \alpha_{02} \beta_{21} - \alpha_{10} \beta_{02} + \alpha_{12} \beta_{20} +
\alpha_{20} \beta_{01} -  \alpha_{21} \beta_{10}\big]\, .
\]
The terms can be collected by vertices and the signs adjusted to yield
\[
\eval{\alpha\wedge \beta}{[012]} = \dfrac{1}6 \big[(\alpha_{01} \beta_{02} - \alpha_{02} \beta_{01}) + (\alpha_{01} \beta_{12} -  \alpha_{12} \beta_{01}) +
(\alpha_{02} \beta_{12} -  \alpha_{12} \beta_{02})\big]\, ,
\]
where the terms on the RHS can be interpreted as alternating products at the three vertices. Such an interpretation has existed at least since~\cite{Hirani2003}. One of the results in this paper is an alternative, averaging interpretation of the discrete wedge product; see Proposition~\ref{prop:averaging}.

\begin{remark}
The discrete $d$ satisfies a Leibniz rule with respect to the discrete $\wedge$ since the coboundary operator does so with respect to $\smile$. It is known that the discrete $\wedge$ is anti-commutative but not associative~\cite{Hirani2003}. This lack of associativity is encoded by an $A_\infty$-algebra structure  on the cochains of a simplicial complex~\cite{DoMoSh2008}. Together with the skew-commutativity of the wedge product, one in fact obtains a $C_\infty$-algebra structure~\cite{Sullivan1977,TrZeSu2007,Wilson2007}. Example~\ref{ex:associative} provides a simple computation demonstrating the failure of associativity.
\end{remark}
    
Wilson~\cite{Wilson2007} defined a cochain product and proved convergence and other properties for this product. Wilson also used this product to define a combinatorial Hodge star operator. The next proposition shows that combinatorial DEC wedge product of Definition~\ref{defn:wedge} is equal to Wilson's cochain product. 
Wilson's cochain product uses the space of Whitney forms~\cite{Dodziuk1974}  on the underlying space of the simplicial complex $X$ (denoted $\mathcal{P}_1^-\Lambda^k(X)$ in the notation of~\cite{ArFaWi2010}) and the family of Whitney interpolation maps $W: C^k(X) \to \mathcal{P}_1^-\Lambda^k(X)$ which map cochains to piecewise polynomial differential forms. 

\begin{proposition} \label{prop:wilson}
Let $X$ be a triangulated manifold, $\alpha\in C^k(X)$, $\beta \in C^l(X)$ and $\nu$ a $(k+l)$-dimensional simplex in $X$. Then 
\[
\eval{\alpha \wedge \beta}{\nu} = \int_\nu W\alpha \wedge W\beta\, .
\]
\end{proposition}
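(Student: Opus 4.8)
The plan is to evaluate both sides on a pair of elementary cochains and match the resulting coefficients. Both $(\alpha,\beta)\mapsto\eval{\alpha\wedge\beta}{\nu}$ and $(\alpha,\beta)\mapsto\int_\nu W\alpha\wedge W\beta$ are bilinear, and each depends only on the restrictions of $\alpha$ and $\beta$ to faces of $\nu$: the cup product in Definition~\ref{defn:wedge} sees only faces of $\nu$, while $W\alpha$ restricted to $\nu$ is determined by the values of $\alpha$ on the faces of $\nu$. Hence it suffices to take $\alpha=\sigma^\ast$ and $\beta=\tau^\ast$, the elementary cochains dual to an oriented $k$-face $\sigma$ and an oriented $l$-face $\tau$ of $\nu$. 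Writing $W\sigma:=W\sigma^\ast$ for the associated Whitney forms, the statement reduces to the single scalar identity $\eval{\sigma^\ast\wedge\tau^\ast}{\nu}=\int_\nu W\sigma\wedge W\tau$.

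For the right-hand side I would substitute the barycentric expression for $W\sigma$ and $W\tau$, reduce using $\sum_i\lambda_i=1$ and $\sum_i d\lambda_i=0$, and integrate the resulting monomials with the Dirichlet formula $\int_\nu\lambda_0^{a_0}\cdots\lambda_{k+l}^{a_{k+l}}\,d\lambda_1\wedge\cdots\wedge d\lambda_{k+l}=\tfrac{a_0!\cdots a_{k+l}!}{(a_0+\cdots+a_{k+l}+k+l)!}$, which is manifestly metric-independent (as it must be, since the left-hand side is). The structural fact to establish here, available classically from Whitney and from Wilson~\cite{Wilson2007}, is that $W\sigma\wedge W\tau$ vanishes identically unless $\sigma$ and $\tau$ meet in exactly one vertex and together span $\nu$, in which case the integral equals $\tfrac{k!\,l!}{(k+l+1)!}$ up to an orientation sign. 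The vanishing is the subtler half: individual monomial terms can survive when $\sigma\cup\tau\neq\nu$, and one must check that they cancel in pairs, as already happens in the sample identity $W\simplex{2}\wedge W\simplex[0]{1}\equiv0$.

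For the left-hand side I would expand Definition~\ref{defn:wedge} and apply the Alexander--Whitney formula $\eval{\sigma^\ast\smile\tau^\ast}{\simplex[\pi(0)]{\pi(k+l)}}=\eval{\sigma^\ast}{\simplex[\pi(0)]{\pi(k)}}\,\eval{\tau^\ast}{\simplex[\pi(k)]{\pi(k+l)}}$ to each permuted simplex, where $\pi\in S_{k+l+1}$ denotes the permutation (the paper's summation index). A term contributes only when the front $k$-face carries the vertices of $\sigma$ and the back $l$-face those of $\tau$; this forces $\sigma$ and $\tau$ to share the vertex $\pi(k)$ and to cover $\nu$, exactly the support found above. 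Precisely $k!\,l!$ permutations contribute, and since transposing two front (or two back) vertices flips both $\sgn(\pi)$ and the value of the alternating cochain $\sigma^\ast$ (or $\tau^\ast$), all $k!\,l!$ contributions are equal; together with the $\tfrac{1}{(k+l+1)!}$ normalization this yields $\eval{\sigma^\ast\wedge\tau^\ast}{\nu}=\tfrac{k!\,l!}{(k+l+1)!}$ up to the same kind of orientation sign.

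Since the magnitude $\tfrac{k!\,l!}{(k+l+1)!}$ and the support agree automatically, the main obstacle is reconciling the two orientation signs. I would handle this either by reducing to a normal form --- placing the shared vertex in the last slot of $\sigma$ and the first slot of $\tau$ and ordering the remaining vertices compatibly with $\nu$, so that both signs are measured against the single reference ordering $\simplex{k+l}$ and compared directly --- or, more cleanly, by invoking naturality. Both sides are natural under the vertex relabelings in $S_{k+l+1}$ (the left-hand side by Proposition~\ref{prop:wedge_naturality}, the right-hand side by naturality of the Whitney map together with the change-of-variables formula for integration), so it suffices to verify the identity on one representative configuration, namely $\sigma=\simplex{k}$ and $\tau=\simplex[k]{k+l}$, generalizing the explicit triangle computation in Example~\ref{eg:wedge1-1}.
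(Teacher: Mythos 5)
Your proposal is correct and follows essentially the same route as the paper's proof: both reduce bilinearly to elementary pairs $(\sigma^\ast,\tau^\ast)$, use the content of Wilson's Theorem~5.2~\cite{Wilson2007} (that $\int_\nu W\sigma\wedge W\tau$ vanishes unless $\sigma$ and $\tau$ meet in exactly one vertex and span $\nu$, and otherwise equals $\varepsilon(\sigma,\tau)\,\tfrac{k!\,l!}{(k+l+1)!}$), and match this against the $k!\,l!$ equal-signed surviving cup-product terms in Definition~\ref{defn:wedge}. The only divergences are in presentation: you offer to re-derive Wilson's integral by barycentric (Dirichlet) integration where the paper simply cites it, and you reconcile the orientation signs by a naturality/normal-form reduction to the configuration $\sigma=\simplex{k}$, $\tau=\simplex[k]{k+l}$, where the paper instead fixes increasing vertex order and uses the relation $\sgn(\sigma)\,\sgn(\tau)=\varepsilon(\sigma,\tau)\,\sgn(\nu')$.
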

Here the $\wedge$ on the left is the combinatorial DEC wedge product of Definition~\ref{defn:wedge} and the one on the right is the wedge product on smooth forms. The RHS above is Wilson's cochain product~\cite[Definition 5.1]{Wilson2007}.
\begin{proof}
    Let $\nu = \simplex{k+l}$ and assume all lower dimensional faces of $\nu$ are oriented in some arbitrary way, for example according to increasing vertex ordering. Identifying cochains and chains let $\alpha = \sum_{\sigma^k\prec \nu} \alpha_\sigma \sigma$ where $\sigma^k \prec \nu$ means $\sigma$ is a $k$-dimensional face of $\nu$ and $\alpha_\sigma \in \R$. Similarly $\beta = \sum_{\tau^l\prec \nu} \beta_\tau \tau$. Then 
    \begin{equation*}
        \int_\nu W\alpha \wedge W\beta = \sum_{\substack{\sigma^k, \tau^l \prec \nu\\ \sigma\cdot \tau = \nu}} \alpha_\sigma \beta_\tau \int_\nu W\sigma \wedge W\tau\, .    
    \end{equation*}
    Here the sum is over all $k$-dimensional faces $\sigma$ and $l$-dimensional faces $\tau$ of $\nu$ such that $\sigma$ and $\tau$ intersect in exactly one vertex and span $\nu$. (We denote this spanning property by $\sigma \cdot \tau = \nu$.) This is because the integrand on the right is zero if $\sigma$ and $\tau$ intersect in more than one vertex. See the proof of~\cite[Theorem 5.2]{Wilson2007}. Also by~\cite[Theorem 5.2]{Wilson2007}, if $\sigma = \simplex[\rho(0)]{\rho(k)}$, $\tau = \simplex[\rho(k)]{\rho(k+l)}$, $\rho \in S_{k+l+1}$ and $\nu' = \simplex[\rho(0)]{\rho(k+l)}$ then
    \[
    \int_{\nu'} W\sigma \wedge W\tau = \varepsilon(\sigma,\tau)
    \frac{k!\; l!}{(k+l+1)!}\, ,
    \]
    where $\varepsilon(\sigma,\tau)$ is a sign determined by $\textrm{orientation}(\sigma)\cdot\textrm{orientation}(\tau) = \varepsilon(\sigma,\tau)\cdot \textrm{orientation}(\nu')$. Without loss of generality, we will use increasing vertex ordering to orient the faces and let $\sgn(\sigma)$ denote the sign of the permutation needed to bring the vertices of $\sigma$ into increasing order etc. Then the equation for $\varepsilon$ is $\sgn(\sigma) \cdot \sgn(\tau) = \varepsilon(\sigma,\tau) \cdot \sgn(\nu')$. Then
    \[
    \int_\nu W\alpha \wedge W\beta = \sum_{\substack{\sigma^k, \tau^l \prec \nu\\ \sigma\cdot \tau = \nu}} \alpha_\sigma \beta_\tau\varepsilon(\sigma,\tau)
    \frac{k!\; l!}{(k+l+1)!}\, .
    \]
    On the other hand, the terms in $\eval{\alpha \wedge \beta}{\nu}$ are of the form $\sgn({\rho})\,\alpha_{\sigma}\beta_{\tau}$ where $\sigma = \simplex[\rho(0)]{\rho(k)}$, $\tau = \simplex[\rho(k)]{\rho(k+l)}$ and $\rho \in S_{k+l+1}$. There are $k! \, l!$ such terms since the last vertex of $\sigma$ equals the first vertex of $\tau$. All of these terms acquire the same sign $\varepsilon(\sigma,\tau)$ when the vertices of $\sigma$ and $\tau$ are permuted to bring them into increasing vertex order. The normalizing factor in Definition~\ref{defn:wedge} is the denominator above.
 \end{proof}

\begin{remark}
    As a corollary of Proposition~\ref{prop:wilson} and~\cite[Theorem 5.4]{Wilson2007} the combinatorial wedge product of DEC converges to the wedge product of smooth forms in the sense of \cite[Theorem 5.4]{Wilson2007}. This proposition also opens 
    up a path to connect DEC to the extensive literature on algebraic structures on simplicial cochains including to $C_\infty$-algebras~\cite{Wilson2007} and $A_\infty$-algebras~\cite{DoMoSh2008}.
\end{remark}

\section{Naturality of Exterior Derivative and Wedge Product}
\label{sec:naturality}
We first recall naturality of the exterior calculus operations $d$ and $\wedge$ on smooth manifolds. Let $M$ and $N$ be smooth manifolds, $F: M \to N$ a smooth map and $\alpha, \beta \in \Lambda^\bullet(N)$ differential forms on $N$. Then naturality of $d$ means that $F^* d = d F^*$, and naturality of $\wedge$ means $F^* (\alpha \wedge \beta) = F^*\alpha \wedge F^*\beta$. Recall that $F^*: \Lambda^\bullet(N) \to \Lambda^\bullet(M)$ is the pullback operator defined by $F^* \alpha (v_1,\ldots, v_k) = \alpha(F_{*,p}v_1, \ldots, F_{*,p}v_k)$ for a $k$-form $\alpha$ and $v_1,\ldots,v_k \in T_pM$ for all $p\in M$. At every point $p\in M$, the linear map $F_{*,p}: T_p M \to T_{F(p)} N$ between tangent spaces is the pushforward or differential which in coordinates in the Jacobian matrix computed at $p$. See \cite{Tu2011} for a review of these concepts.

An example of naturality of $d$ is the chain rule in single variable calculus. For $f, g: \R \to \R$, $f(g(x))' = f'(g(x)) g'(x)$. The LHS is $d(f \circ g)_x = d(g^* f)_x$ and the RHS is $(g^* df)_x$ since $f'(g(x)) g'(x) = df_{g(x)} g_{*, x}$. Another application of naturality of $d$ is in proving the diffeomorphism invariance of cohomology groups. The naturality of $\wedge$ is used, for example, in change of coordinate computations.

We will define a pullback of cochains induced from abstract simplicial maps and show that the discrete $d$ and $\wedge$ commute with this pullback. First we recall the definition of abstract simplicial maps and a homomorphism on chains induced from such maps. See~\cite{Munkres1984,Lee2011} for more details.

\begin{definition}
  For simplicial complexes $X$ and $Y$ an \emph{abstract simplicial map} $f\colon  X\to Y$ is given by the data of a map of sets, $f^{(0)}: X^{(0)} \to Y^{(0)}$, with the property that if $\{u_0, \ldots u_k\}$ spans a simplex in $X$, the set $\{f(u_0), \ldots, f(u_k)\}$ spans a simplex in $Y$. The map $f^{(0)}$ is  called the \emph{vertex map of $f$}. 
\end{definition}
For us, abstract simplicial maps will be maps between oriented simplices. That is, the sets $\{u_0, \ldots, u_k\}$ (simplices) above are replaced by ordered sets $[u_0, \ldots, u_k]$ (oriented simplices). 

\begin{remark}
  An abstract simplicial map can collapse simplices. But the spanning property of vertex maps implies that vertices connected by an edge do not lose that property of being ``discretely near''. Intuitively, vertices can move closer, but cannot move far apart. Thus, abstract simplicial maps are a combinatorial proxy for smooth (or at least continuous) maps.
\end{remark}

\begin{example}
    Let $X$ be the simplicial complex that is the boundary of a triangle with vertices $u_0, u_1, u_2$ and $Y$ the simplicial complex with vertices $v_0, v_1, v_2$ formed by the two edges $[v_0, v_1]$ and $[v_1, v_2]$. The vertex map $u_i \mapsto v_i$, for $i=0,1,2$ does \emph{not} define an abstract simplicial map because $[u_0, u_2]$ is an edge in $X$ but $[v_0, v_2]$ is not an edge in $Y$. The vertices $u_0$ and $u_2$ that were ``nearby" in the sense of being connected by an edge have become further apart after the mapping.
\end{example}
\begin{example}\label{ex:collapse}
    Now consider the same $X$ and let $Y$ be a simplicial complex consisting of the single edge $[v_0, v_1]$. The vertex map $u_i \mapsto v_i$, for $i=0,1$ and $u_2 \mapsto v_0$ \emph{does} define an abstract simplicial map and in this map the edge $[u_0, u_2]$ collapses to the vertex $v_0$. 
\end{example}
   
Next we review a standard construction in algebraic topology that builds a homomorphism on chains from an abstract simplicial map. See~\cite{Munkres1984} for the applications and properties of this homomorphism. 

\begin{definition}
  Let $f : X \to Y$ be an abstract simplicial map between simplicial complexes $X$ and $Y$. Define a homomorphism $f_\sharp : C_k(X) \to C_k(Y)$ for each $k$ determined by the values on oriented simplices,
  \[
    f_\sharp([u_0,\ldots,u_k]) =
    \begin{cases}
      [f(u_0),\ldots, f(u_k)], & \text{if } f(u_0), \ldots, f(u_k) \text{ distinct}\\
      0 & \text{otherwise}
    \end{cases}
  \]
\end{definition}
It is easy to see that $f_\sharp$ is well-defined since both sides change sign according to permutation of the vertices. We use this to define the pullback of cochains.
\begin{definition}
  Given $f : X \to Y$ an abstract simplicial map and a cochain $\alpha \in C^k(Y)$ the \emph{pullback} $f^*\alpha \in C^k(X)$ is defined by its values $f^*\alpha(c) := \alpha(f_\sharp(c))$ for chains $c \in C_k(X)$.
\end{definition}

\begin{proposition}[Naturality of discrete $d$] \label{prop:dnaturality}
   Let $f: X \to Y$ be an abstract simplicial map between simplicial complexes $X$ and $Y$. Then discrete $d$ commutes with the pullback along $f$: for any $\alpha\in C^k(Y)$ we have $f^*d\alpha=d (f^*\alpha)$. Equivalently, 
  \begin{equation} \label{eq:dnaturality}
    \eval{f^\ast d\alpha}{\simplex[u_0]{u_{k+1}}} =
    \eval{d f^\ast\alpha}{\simplex[u_0]{u_{k+1}}}
  \end{equation}
   for all $(k+1)$-simplices $\simplex[u_0]{u_{k+1}}$ in $X$.
\end{proposition}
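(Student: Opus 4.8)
The plan is to reduce the claimed identity to the statement that the induced chain map $f_\sharp$ commutes with the boundary operator $\partial$, and then to verify that chain-map property directly, paying attention to the degenerate simplices that $f_\sharp$ sends to zero. First I would unwind both sides of \eqref{eq:dnaturality} using the definitions of the pullback and of discrete $d$. Fix a $(k+1)$-simplex $\sigma = \simplex[u_0]{u_{k+1}}$ in $X$. Applying the definition of pullback and then Definition~\ref{defn:d} gives
\[
\eval{f^\ast d\alpha}{\sigma} = \eval{d\alpha}{f_\sharp \sigma} = \eval{\alpha}{\partial f_\sharp \sigma},
\]
while applying Definition~\ref{defn:d} first and then the definition of pullback gives
\[
\eval{d f^\ast\alpha}{\sigma} = \eval{f^\ast\alpha}{\partial \sigma} = \eval{\alpha}{f_\sharp \partial \sigma}.
\]
Since $\alpha$ is arbitrary and $\sigma$ ranges over all oriented $(k+1)$-simplices, the proposition is equivalent to the identity $\partial f_\sharp = f_\sharp \partial$ on $C_{k+1}(X)$, and by linearity it suffices to verify it on a single oriented simplex $\sigma$.

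Second, I would establish $\partial f_\sharp \sigma = f_\sharp \partial \sigma$ by a short case analysis on how many vertices of $\sigma$ are identified by the vertex map $f$. If the images $f(u_0),\ldots,f(u_{k+1})$ are all distinct, then $f_\sharp$ acts as a relabeling of vertices; in this case every face of $\sigma$ again has distinct images, so $f_\sharp$ commutes with $\partial$ termwise and both sides agree immediately. If at least two images coincide, then $f_\sharp \sigma = 0$ by definition, so the left-hand side vanishes and the task becomes showing that the right-hand side $f_\sharp \partial \sigma$ vanishes as well.

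The crux is therefore the collapsing case, which I expect to be the main obstacle because it requires careful sign bookkeeping rather than any conceptual difficulty. I would first dispose of the easy subcases: if the coincidence pattern is such that every codimension-one face of $\sigma$ still contains a repeated image (for instance, if three or more vertices share an image, or if there are two disjoint coincident pairs), then every term of $f_\sharp \partial \sigma$ is zero. The only remaining subcase is that of exactly one coincident pair $f(u_a) = f(u_b)$ with $a < b$ and all other images distinct from each other and from this shared value $w$. Then precisely the two faces obtained by omitting $u_a$ and by omitting $u_b$ survive in $f_\sharp \partial \sigma$; these two image simplices have the same vertex set, and their orderings differ only by the position of the shared vertex $w$. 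Relating them requires moving $w$ past the $b-a-1$ intermediate vertices, which introduces a sign $(-1)^{b-a-1}$. Combined with the boundary signs $(-1)^a$ and $(-1)^b$, this yields $(-1)^a(-1)^{b-a-1} + (-1)^b = (-1)^{b-1} + (-1)^b = 0$, so the two surviving terms cancel and $f_\sharp \partial \sigma = 0$, as required.

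This argument is essentially the classical fact that an abstract simplicial map induces a chain map on simplicial chains (cf.~\cite{Munkres1984}); the only point meriting care, and absent from the usual non-degenerate treatment, is the cancellation in the single-coincident-pair subcase above. Once $\partial f_\sharp = f_\sharp \partial$ is in hand, the proposition follows from the two displayed computations.
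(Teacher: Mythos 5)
Your proposal is correct and follows essentially the same route as the paper: both reduce \eqref{eq:dnaturality} to the chain-map identity $\partial f_\sharp = f_\sharp \partial$ via the definitions of $d$ and $f^\ast$. The only difference is that the paper cites \cite[Lemma 12.1]{Munkres1984} for this identity, whereas you prove it inline --- and your case analysis, including the sign cancellation $(-1)^a(-1)^{b-a-1} + (-1)^b = 0$ for the single-coincident-pair case, is an accurate self-contained proof of that cited fact.
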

\begin{proof}
This follows immediately from the definitions of $d$ and $f^*$ and the fact that $\partial f_\sharp = f_\sharp \partial$. See~\cite[Lemma 12.1]{Munkres1984} for a proof of this fact.
\end{proof}

\begin{example}
Let $X$ and $Y$ be the simplicial complexes of Example~\ref{ex:collapse}, $f$ the simplicial map defined in that example and let $\alpha \in C^0(X)$ be the 0-cochain that takes the value $\alpha_0$ and $\alpha_1$ on the vertices $v_0$ and $v_1$. Then the pullback $f^*\alpha$ takes the values $\alpha_0, \alpha_1$ and $\alpha_0$ on the vertices $u_0, u_1$ and $u_2$, respectively. Thus the values of the $df^*\alpha$ on the three edge of $X$ are $\eval{df^*\alpha}{[u_0,u_1]} = \alpha_1 - \alpha_0$,  $\eval{df^*\alpha}{[u_0,u_2]} = \alpha_0 - \alpha_0 = 0$, and $\eval{df^*\alpha}{[u_1,u_2]} = \alpha_0 - \alpha_1$. On the other hand the evaluations of the pullback of $d\alpha$ are $\eval{f^*d\alpha}{[u_0, u_1]} = \eval{d\alpha}{[v_0, v_1]} = \alpha_1 - \alpha_0$, $\eval{f^*d\alpha}{[u_0, u_2]} = \eval{d\alpha}{[v_0]} = 0$, and $\eval{f^*d\alpha}{[u_1, u_2]} = \eval{d\alpha}{[v_1, v_0]} = \alpha_0 - \alpha_1$, verifying the naturality of $d$ in this example.
\end{example}


\begin{proposition}[Naturality of discrete wedge product]\label{prop:wedge_naturality}
Let $f: X \to Y$ be an abstract simplicial map between simplicial complexes $X$ and $Y$. Then the discrete $\wedge$ commutes with the pullback along $f$: for all $\alpha \in C^k(Y)$ and $\beta \in C^l(Y)$, we have $f^*(\alpha\wedge\beta)=f^*\alpha\wedge f^*\beta$. Equivalently, 
  \begin{equation}\label{eq:wedge_naturality}
   \eval{f^\ast(\alpha \wedge \beta)}{\simplex[u_0]{u_{k+l}}}=
    \eval{f^\ast\alpha \wedge f^\ast \beta}{\simplex[u_0]{u_{k+l}}}
    \end{equation}  
    for all $(k+l)$-simplices $\simplex[u_0]{u_{k+l}}$ in $X$.
\end{proposition}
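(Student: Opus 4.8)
The plan is to expand both sides of~\eqref{eq:wedge_naturality} using the definition of $\wedge$ and the cup product, and then exploit the fact that every summand factors through the vertex map $f$. Writing $\nu = \simplex[u_0]{u_{k+l}}$, the left-hand side is $\eval{\alpha\wedge\beta}{f_\sharp \nu}$, which vanishes when the images $f(u_0),\ldots,f(u_{k+l})$ are not distinct and otherwise equals $\eval{\alpha\wedge\beta}{[f(u_0)\ldots f(u_{k+l})]}$. For the right-hand side I would substitute Definition~\ref{defn:wedge} together with the cup product formula $\eval{f^*\alpha \smile f^*\beta}{\simplex{k+l}} = \eval{f^*\alpha}{\simplex{k}}\,\eval{f^*\beta}{\simplex[k]{k+l}}$, giving
\[
\eval{f^*\alpha \wedge f^*\beta}{\nu} = \frac{1}{(k+l+1)!}\sum_{\tau\in S_{k+l+1}} \sgn(\tau)\, \eval{f^*\alpha}{\simplex[u_{\tau(0)}]{u_{\tau(k)}}}\,\eval{f^*\beta}{\simplex[u_{\tau(k)}]{u_{\tau(k+l)}}}.
\]

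The key observation is that each factor depends only on the images of the vertices: by definition of the pullback, $\eval{f^*\alpha}{\simplex[u_{\tau(0)}]{u_{\tau(k)}}} = \eval{\alpha}{[f(u_{\tau(0)})\ldots f(u_{\tau(k)})]}$ when these images are distinct and is $0$ otherwise, and likewise for $\beta$. Hence, setting $w_i = f(u_i)$ and $\Phi(w_0,\ldots,w_{k+l}) \defeq \eval{\alpha}{[w_0\ldots w_k]}\,\eval{\beta}{[w_k \ldots w_{k+l}]}$ (interpreted as $0$ whenever the front or back face has a repeated vertex), the right-hand side equals the total antisymmetrization $\frac{1}{(k+l+1)!}\sum_\tau \sgn(\tau)\,\Phi(w_{\tau(0)},\ldots,w_{\tau(k+l)})$. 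Here one must check that $\Phi$ is well-defined on the faces that occur: each such face spans a simplex in $X$, so by the simplicial-map property its image spans a simplex in $Y$, and the relevant evaluations of $\alpha$ and $\beta$ make sense.

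I would then finish with a case split governed by whether the $w_i$ are distinct. If they are, the antisymmetrization is literally $\eval{\alpha\wedge\beta}{[w_0\ldots w_{k+l}]}$ by Definition~\ref{defn:wedge}, matching the left-hand side. If two images coincide, say $w_a = w_b$, then the left-hand side is $0$; and the right-hand side is the total antisymmetrization of a function two of whose arguments agree, which is also $0$ by the standard fact that an alternating function vanishes when two inputs coincide (applying the transposition $(a\,b)$ shows the value equals its own negative). Both sides therefore agree in every case.

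The main obstacle is precisely this collapsing case. Unlike naturality of $d$ (Proposition~\ref{prop:dnaturality}), which followed formally from $\partial f_\sharp = f_\sharp\partial$, the cup product is \emph{not} natural at the cochain level under collapses: when a coincidence $f(u_a) = f(u_b)$ straddles the splitting point of a given ordering (so that $u_a$ lies in the front face and $u_b$ in the back face, with neither equal to the shared vertex), that ordering's summand $\sgn(\tau)\eval{f^*\alpha \smile f^*\beta}{\cdot}$ can be nonzero even though $f_\sharp\nu = 0$. The content of the proposition is that, after summing over all orderings, antisymmetrization forces these stray contributions to cancel. Turning that cancellation into the one-line alternating-function argument above is the crux; the rest is bookkeeping with the signs and the common normalization $1/(k+l+1)!$ shared by both sides.
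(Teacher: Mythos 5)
Your proposal is correct and follows essentially the same route as the paper: establish naturality of the cup product directly from the definitions, match the injective case term by term under Definition~\ref{defn:wedge}, and dispose of the collapsing case by cancellation under antisymmetrization. Your single transposition involution on $S_{k+l+1}$ neatly subsumes the paper's two subcases (both repeated images landing in one face, handled there for dimension reasons, versus straddling the shared vertex, handled there by pairing terms with opposite signs $\sgn(\tau)$), but the underlying cancellation mechanism is identical.
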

\begin{proof}
    We note that the statement~\eqref{eq:wedge_naturality} is for a simplex but extends by linearity to chains. The analogous statement for the cup product follows directly from the definitions, 
  \begin{align*}
    \eval{f^\ast(\alpha \smile \beta)}{\simplex[u_0]{u_{k+l}}} &=
                                                         \eval{\alpha \smile \beta}{f(\simplex[u_0]{u_{k+l}})}=
    \eval{\alpha \smile \beta}{\simplex[f(u_0)]{f(u_{k+l})}}\\
                                                                 &= \eval{\alpha}{\simplex[f(u_0)]{f(u_k)}}\;\;
                                                                   \eval{\beta}{\simplex[f(u_k)]{f(u_{k+l})}}\\
    &= \eval{f^\ast\alpha}{\simplex[u_0]{u_k}}\eval{f^\ast \beta}{\simplex[u_k]{u_{k+l}}}\\
    &= \eval{f^\ast\alpha \smile f^\ast \beta}{\simplex[u_0]{u_{k+l}}}\, .
  \end{align*}
  We observe that (for dimension reasons) both sides are 0 if the vertex map of $f$ is not a bijection when restricted to $\{u_0,u_1,\dots, u_{k+l}\}$. 
  
  Adapting the above computation to the wedge product, the terms in the expansion of $\eval{\alpha \wedge \beta}{\simplex[f(u_0)]{f(u_{k+l})}}$ are of the form $\eval{\alpha \smile \beta}{\simplex[f(u_{\tau(0)})]{f(u_{\tau(k+l)})}}$ where $\tau\in S_{k+l+1}$ is a permutation. If the vertex map of $f$ is a bijection, then each such term is equal to
  \begin{equation}\label{eq:tau_term}
    \eval{\alpha}{\simplex[f(u_{\tau(0)})]{f(u_{\tau(k)})}}\;\;
    \eval{\beta}{\simplex[f(u_{\tau(k)})]{f(u_{\tau(k+l)})}}
  \end{equation}
  by the cup product result by relabelling the vertices under the permutation $\tau$.

  If the vertex map of $f$ is not a bijection on $\{u_0,u_1,\dots, u_{k+l}\}$, then the LHS of~\eqref{eq:wedge_naturality} is 0. To show that the RHS is also 0, assume that for some $i \ne j$, $f(u_i) = f(u_j)$. If both $i$ and $j$ are in $\{\tau(0),\ldots,\tau(k)\}$ or both are in $\{\tau(k),\ldots,\tau(k+l)\}$ then that particular term of the form~\eqref{eq:tau_term} is 0 for dimensional reasons.

  Next assume that $i =\tau(a)$ and $j=\tau(b)$ for $0\le a \le k$ and $k \le b \le k+l$ so that the term of type~\eqref{eq:tau_term} is not automatically 0. In this case, there will be a matching term in which $j=\tau(a)$ and $i=\tau(b)$. These two terms are identical and appear with opposite signs $\sgn(\tau)$ and hence cancel.
\end{proof}

In addition to naturality, the discrete $d$ and $\wedge$ satisfy a Leibniz rule~\cite{Hirani2003}. This is proved here for completeness.
\begin{proposition}[Leibniz rule]\label{prop:Leibniz}
  Let $\alpha \in C^{k}(X)$ and $w \in C^l(X)$ and $c \in C_{k+l}(X)$. Then
    \[
    \eval{d (\alpha \wedge w)}{c}=
    \eval{d \alpha \wedge w +   (-1)^k \alpha\wedge d w}{c}\, .
    \]
\end{proposition}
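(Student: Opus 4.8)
The plan is to reduce the discrete Leibniz rule to the corresponding algebraic identity for the cup product, just as naturality of $d$ was reduced to $\partial f_\sharp = f_\sharp \partial$. The key observation is that the discrete wedge product is, by Definition~\ref{defn:wedge}, an antisymmetrization of the cup product, and the antisymmetrization operator is a fixed linear combination of permutations that is independent of $\alpha$ and $w$. So if the Leibniz rule holds for $\smile$, I expect it to transfer to $\wedge$ by averaging, provided the coboundary $d$ interacts correctly with the antisymmetrization.

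First I would recall (or establish) the cochain-level Leibniz rule for the cup product: for $\alpha \in C^k(X)$ and $w \in C^l(X)$ one has $d(\alpha \smile w) = d\alpha \smile w + (-1)^k \alpha \smile dw$. This is the standard coboundary formula for the cup product and follows from a direct computation using Definition~\ref{defn:d} (i.e.\ $\eval{d\gamma}{\sigma} = \eval{\gamma}{\partial \sigma}$) together with the combinatorial definition of $\smile$ on the front-face/back-face decomposition; I would either cite this as the classical simplicial cup-product identity or verify it by expanding $\eval{d(\alpha\smile w)}{\simplex{k+l+1}} = \eval{\alpha \smile w}{\partial\simplex{k+l+1}}$ and collecting the boundary terms into those where the omitted vertex lies in the front $k$-face versus the back $l$-face.

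Next I would pass from $\smile$ to $\wedge$. Since $d$ is defined via $\partial$ and $\partial$ is a fixed boundary map, the key compatibility I need is that $d$ commutes with the antisymmetrization built into~\eqref{eq:wedge}. Because $d$ acting on a $(k+l)$-cochain is evaluated on $(k+l+1)$-simplices by pulling back along $\partial$, and antisymmetrization over $S_{k+l+1}$ followed by $\partial$ can be compared to $\partial$ followed by antisymmetrization over $S_{k+l+2}$, I would track how the symmetrizing sum and the sign factor $\sgn(\tau)$ behave under taking boundaries. The plan is to show $d$ of the antisymmetrized cup product equals the antisymmetrization of $d$ of the cup product, so that applying the already-established cup-product Leibniz rule inside the averaging sum yields the two terms $d\alpha \wedge w$ and $(-1)^k \alpha \wedge dw$ directly.

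The main obstacle will be the bookkeeping in that last step: verifying that the antisymmetrization operator commutes with $d$ in the precise sense needed, since the permutation groups have different sizes ($S_{k+l+1}$ for $\alpha \wedge w$ versus $S_{k+l+2}$ for the wedge of the degree-$(k+l+1)$ result $d(\alpha\wedge w)$), and matching the signs $(-1)^i$ from $\partial$ against the signs $\sgn(\tau)$ from antisymmetrization. I expect this to work out because the coboundary operator is known to be a derivation with respect to $\smile$ and the antisymmetrization is a natural operation, but the careful sign-tracking across the differing symmetric groups is where I anticipate the real work. An alternative, cleaner route I would keep in mind is to invoke Proposition~\ref{prop:wilson}: writing everything via Whitney forms, the identity $d(W\alpha \wedge W w) = dW\alpha \wedge W w + (-1)^k W\alpha \wedge d W w$ is just the smooth Leibniz rule, and $dW = W d$ (the de Rham/Whitney commutation) then delivers the discrete statement after integrating over $c$; this may sidestep the combinatorial sign-chasing entirely.
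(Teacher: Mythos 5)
Your main route is exactly the paper's: it too reduces the statement to the cup-product Leibniz rule after showing that $d$ commutes with the antisymmetrization defining $\wedge$. The sign and group-size bookkeeping you flag as ``the real work'' is dispatched in the paper by a single observation---each $\tau \in S_{k+l+2}$ acts on the $(k{+}l{+}1)$-simplex $\sigma$ as an abstract simplicial isomorphism and hence commutes with $\partial$, so that $\eval{d(\alpha\wedge w)}{\sigma} = \eval{\alpha\wedge w}{\partial\sigma} = \sum_\tau \sgn(\tau)\,\eval{\alpha\smile w}{\tau\,\partial\sigma} = \sum_\tau \sgn(\tau)\,\eval{d(\alpha\smile w)}{\tau\sigma}$ (up to normalization), after which applying the cup-product Leibniz rule inside the sum finishes the proof.
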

\begin{proof}
It is enough to show this for a $(k{+}l{+}1)$-simplex and then extend by linearity to chains. Each element of the permutation group $S_{k{+}l{+}2}$ acts as an isomorphism on a $(k{+}l{+}1)$-simplex $\sigma$. Each permutation of vertices defines a vertex map which corresponds to an abstract simplicial isomorphism. Using $\tau$ to refer to an element of $S_{k{+}l{+}2}$  as well as the corresponding simplicial isomorphism, note that $\tau$ commutes with the boundary operator on chains. Thus
\begin{align*}
  \eval{d(\alpha \wedge w)}{\sigma} = \eval{\alpha \wedge w}{\partial \sigma}
  & = \sum_\tau\sgn(\tau) \eval{\alpha \smile w}{\tau \partial \sigma}\\
  & = \sum_\tau \sgn(\tau) \eval{\alpha \smile w}{\partial \tau \sigma} =
    \sum_\tau\sgn(\tau) \eval{d(\alpha \smile w)}{\tau \sigma}\, ,
\end{align*}        
which, by Leibniz rule for cup products is
\[  
  \sum_\tau \sgn(\tau) \eval{d\alpha \smile w+
      (-1)^{\vert \alpha \vert} \alpha \smile d w}{\tau\sigma} =
        \eval{d\alpha \wedge w+ (-1)^{\vert \alpha \vert} \alpha     \wedge d w}{\sigma}\, .
\]

\end{proof}
It is illustrative to see this in the following simplest example.
\begin{example} Let $f,g \in C^0(X)$, where $X$ is the edge $[01]$. We will denote evaluation of $f$ on vertex $i$ as $f_i$ etc. Now we check that 
\[   \eval{d(f\wedge g)}{[01]} =
    \eval{df\wedge g + f\wedge dg}{[01]}\, .
\]
The LHS is $f_1 g_1 - f_0 g_0$ and so is the RHS since
  \begin{align*}
    \eval{df \wedge g}{[01]} &= 
    \frac{1}2\big[ \eval{df}{[01]} \, g_1 - \eval{df}{[10]} \, g_0 \big] = \frac{1}2\big[(f_1 - f_0) g_1 - (f_0 - f_1) g_0\bigr]\\
    \eval{f\wedge dg}{[01]} &= 
    \frac{1}2\big[f_0\, \eval{dg}{[01]} - f_1 \,\eval{dg}{[10]}\big]
    = \frac{1}2 \big[f_0(g_1 - g_0) - f_1 (g_0 - g_1)\big]\, .
  \end{align*}
\end{example}

\section{Averaging Interpretation of Wedge Product}

 The main result of this section shows that the combinatorics of the discrete wedge product in DEC organizes into a neat averaging formula; see Proposition~\ref{prop:averaging}. We begin with motivation from several low-dimensional examples that illustrate this averaging interpretation. 

\begin{example}
Let $X$ be the oriented simplicial complex with a single edge $[01]$, and consider $f \in C^0(X)$ and $\alpha \in C^1(X)$. Let $f_i$ denote the values of $f$ on vertex $i$ and $\alpha_{01}$ the evaluation of $\alpha$ on the edge $[01]$. Then the evaluation of $f\alpha = f\wedge \alpha$ on the edge $[01]$ is a weighted average of the values of $f$, 
\begin{align*}
\eval{f \wedge \alpha}{[01]} = \frac{1}2 \big[\eval{f\smile \alpha}{[01]} - \eval{f\smile \alpha}{[10]}\big] = 
\frac{1}2\big[f_0\; \alpha_{01} - f_1 \; \alpha_{10}\big] = 
\frac{f_0 + f_1}2 \; \alpha_{01}\, .
\end{align*}
\end{example}


\begin{example}
Example~\ref{eg:wedge1-1} reviewed the standard alternating product interpretation of the discrete wedge product of two 1-cochains. This was achieved by collecting terms at each vertex. However, one can also collect the terms by edges, yielding a sum of weighted averages
\begin{equation} \label{eq:averaged012}
\eval{\alpha\wedge \beta}{[012]} =\dfrac{1}3 \bigg[ \alpha_{01}\dfrac{\left(\beta_{02} + \beta_{12}\right)}2 + 
\alpha_{12}\dfrac{\left(\beta_{10} + \beta_{20}\right)}2 +
\alpha_{20}\dfrac{\left(\beta_{01} + \beta_{21}\right)}2
\bigg]\, .
\end{equation}
In words,~\eqref{eq:averaged012} shows that the value of the wedge product of 1-cochains $\alpha$ and $\beta$ on a triangle comes from going around the triangle multiplying the value of $\alpha$ on an edge by the average value of $\beta$ on the two edges incident on that first edge. This is done for all 3 edges and the final result is the average of these products. Alternatively one can reverse the roles of $\alpha$ and $\beta$ (in terms of inner averaging and outer averaging) and then 
\begin{equation} \label{eq:averaged012other}
  \eval{\alpha\wedge \beta}{[012]} =\dfrac{1}3
  \bigg[
  \dfrac{\left(\alpha_{20} +  \alpha_{21}\right)}2 \, \beta_{01} +
  \dfrac{\left(\alpha_{01} + \alpha_{02}\right)}2\, \beta_{12} +
  \dfrac{\left(\alpha_{10} + \alpha_{12}\right)}2\, \beta_{20}
  \bigg]\, .
\end{equation}
We emphasize that this averaging interpretation depends on the choice of orientation of chains. For example, $\beta_{02}$ is used in the first term in~\eqref{eq:averaged012} while $\beta_{20}$ is used in the second term and $\alpha_{20}$ rather than $\alpha_{02}$ is used in the third term. Informally, in this case one goes around the triangle in the direction it is oriented (counterclockwise in this case) taking values of $\alpha$ on each edge and the values of $\beta$ being used are taken on the two remaining edges pointing away. For~\eqref{eq:averaged012other} one takes the values with edges pointing towards. Part of the content of Proposition~\ref{prop:averaging} is to show that such choices permitting an averaging interpretation always exist.
\end{example}
\end{example}

\begin{example}
  The case of $\alpha \in C^0(X)$ and $\beta \in C^2(X)$ also demonstrates the simplicity of the averaging interpretation.
  \begin{align*}
    \eval{\alpha\wedge \beta}{[012]} &= \left(\dfrac{\alpha_0 + \alpha_1 + \alpha_2}3\right) \beta_{012}
\end{align*}
\end{example}

To build intuition for the general case proved in Proposition~\ref{prop:averaging}, consider the following example.
\begin{example}\label{eg:wedge2-1}
Let $\alpha \in C^2(X)$ and $\beta \in C^1(X)$ for $X$ the tetrahedron $[0123]$.  The averaging interpretation will yield an average over 4 terms, one for each triangle of the tetrahedron. Each term will be a product of the value of $\alpha$ on a triangle multiplied by the average of the 3 values of $\beta$ corresponding to the other 3 edges of the tetrahedron touching the triangles at the vertices of the triangle. Explicitly,
\begin{multline}\label{eq:wedge2-1}
  \eval{\alpha \wedge \beta}{[0123]} = \dfrac{1}4
  \Big[
  \alpha_{012}\left(\dfrac{\beta_{03}+ \beta_{13} + \beta_{23}}{3}\right) +
  \alpha_{031}\left(\dfrac{\beta_{02}+ \beta_{12} + \beta_{32}}{3}\right) \\+
  \alpha_{023}\left(\dfrac{\beta_{01}+ \beta_{21} + \beta_{31}}{3}\right) +
  \alpha_{132}\left(\dfrac{\beta_{10}+ \beta_{20} + \beta_{30}}{3}\right)
  \Big]. \end{multline}
The roles of $\alpha$ and $\beta$ in inner and outer averaging could have been reversed as in~\eqref{eq:averaged012other} as compared with~\eqref{eq:averaged012}.  Notice again that particular choices of orientations have been used in order to achieve the averaging interpretation in~\eqref{eq:wedge2-1}. All triangle terms are on triangles taken counter-clockwise viewed from outside the tetrahedron and all edge terms are on edges going away from the triangle. To see what becomes of the factor 1/24 = 1/(2+1+1)! in Definition~\ref{defn:wedge}, after collecting the terms by vertices the terms can be arranged as
\begin{multline*}
    \frac{1}{24}\Big[
    2(\alpha_{012}\beta_{03} - \alpha_{013}\beta_{02} + \alpha_{023}\beta_{01}) +
    2(\alpha_{012}\beta_{13} - \alpha_{013}\beta_{12} + \alpha_{123}\beta_{01}) +\\
    2(\alpha_{012}\beta_{23} - \alpha_{023}\beta_{12} + \alpha_{123}\beta_{02}) +
    2(\alpha_{013}\beta_{23} - \alpha_{023}\beta_{13} + \alpha_{123}\beta_{03}) 
    \Big]\, .
\end{multline*}
Thus the factor outside becomes 1/12 which finally appears in equation~\eqref{eq:wedge2-1} as (1/4)(1/3) with (1/4) for the outer averaging over the 4 triangles of the tetrahedron and (1/3) for the inner averaging over the 3 other edges touching each triangle.
\end{example}

With the above motivating examples in place, we turn to the general proof of the averaging interpretation of the discrete wedge product. Let $k,l \in \mathbb{Z}_{\ge 0}$ and $\sigma = \{0, 1, \ldots, k+l\}$ and let $S_{k+l+1}$ be the group of permutations of the elements of $\sigma$. Let $i: S_k \hookrightarrow S_{k+l+1}$ and $j:S_l \hookrightarrow S_{k+l+1}$ be inclusions so that elements of $i(S_k)$ and $j(S_l)$ act on the first $k$ and last $l$ positions of the input, respectively. For any $F \subset \sigma$ with $k$ elements, $v \in \sigma\setminus F$ and $G = \sigma \setminus (F \cup \{v\})$ define the set of permutations
\[
  P(F,v,G) \defeq \setdef{\rho}{\rho \in S_{k+l+1}, \; \rho(0), \ldots, \rho(k-1) \in F,
    \; \rho(k) = v, \; \rho(k+1),\ldots,\rho(k+l)\in G}\, .
\]
We will denote by $(f,v,g)$ an ordering of the elements of $\sigma$ such that as sets, $f = F$ and $g = G$.  By definition of $P(F,v,G)$, each such ordering corresponds uniquely to a permutation in $P(F,v,G)$. We will write the action of a permutation $\rho \in S_{k+l+1}$ on an ordering $(f,v,g)$ as $(\rho(f), v, \rho(g))$ and the action of $\tau_k \circ \tau_l$ for $\tau_k \in i(S_k)$ and $\tau_l \in j(S_l)$ as  $(\tau_k(f),v,\tau_l(g))$.
  
\begin{lemma}\label{lem:permutation}
  Let $F,v,G$ be as above and $(f_0, v, g_0)$ an ordering corresponding to a particular chosen permutation $\tau \in P(F,v,G)$. Then for all $(f,v,g)$ orderings corresponding to permutations in $P(F,v,G)$ there exist $\tau_k \in i(S_k)$ and $\tau_l \in j(S_l)$ depending on $(f,v,g)$ such that
  \begin{enumerate}[label=(\roman*)]
  \item   $(\tau_k(f), v, \tau_l(g)) = (f_0, v, g_0)$; and
  \item $\sgn(\tau_k) \sgn(\tau_l) = \sgn(\tau)$.
  \end{enumerate}
\end{lemma}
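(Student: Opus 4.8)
The plan is to read Lemma~\ref{lem:permutation} as a statement about a simply transitive group action, and then to track signs through composition inside $S_{k+l+1}$. The starting observation is that $P(F,v,G)$ has exactly $k!\,l!$ elements: a permutation in $P(F,v,G)$ is determined by an ordering of the $k$-element set $F$ in the first $k$ slots together with an ordering of the $l$-element set $G$ in the last $l$ slots, with the middle slot pinned to $v$. The subgroup $i(S_k)\times j(S_l)\le S_{k+l+1}$ acts on these orderings by independently reshuffling the $F$-block (positions $0,\dots,k-1$) and the $G$-block (positions $k+1,\dots,k+l$), and this is precisely the action written $(f,v,g)\mapsto(\tau_k(f),v,\tau_l(g))$.

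For part (i) I would argue that this action is simply transitive. Since $S_k$ acts simply transitively on orderings of $F$ (via $i$, on the first $k$ positions) and $S_l$ acts simply transitively on orderings of $G$ (via $j$, on the last $l$ positions), and the two blocks are disjoint with $v$ held fixed in position $k$, there is a \emph{unique} pair $(\tau_k,\tau_l)$ carrying a given ordering $(f,v,g)$ to the chosen reference ordering $(f_0,v,g_0)$. Existence and uniqueness of $\tau_k,\tau_l$ is then immediate; this is the routine part.

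The crux is part (ii), the sign bookkeeping. Here I would realize all the data inside $S_{k+l+1}$: write $\rho$ for the permutation corresponding to $(f,v,g)$, and recall that $\tau$ corresponds to the reference $(f_0,v,g_0)$. Because $\tau_k$ and $\tau_l$ are supported on the disjoint position-blocks $\{0,\dots,k-1\}$ and $\{k+1,\dots,k+l\}$, their signs are unchanged when viewed in $S_{k+l+1}$, they commute, and by multiplicativity $\sgn(\tau_k\tau_l)=\sgn(\tau_k)\sgn(\tau_l)$. The identity $(\tau_k(f),v,\tau_l(g))=(f_0,v,g_0)$ says that the block permutation $\tau_k\tau_l$ carries the sequence $\rho$ to the sequence $\tau$, so that (up to a convention-dependent inverse) $\tau_k\tau_l=\rho^{-1}\tau$; taking $\sgn$ gives $\sgn(\tau_k)\sgn(\tau_l)=\sgn(\rho)\sgn(\tau)$. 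Equivalently, the combination $\sgn(\rho)\sgn(\tau_k)\sgn(\tau_l)=\sgn(\tau)$ is \emph{constant} across the block $P(F,v,G)$, which is the form that will be used in Proposition~\ref{prop:averaging}.

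The main obstacle I anticipate is fixing the orientation conventions so that this clean sign actually drops out: the direction of the action (carrying $(f,v,g)$ to the reference versus the reverse), whether reordering the vertices of a simplex multiplies a cochain evaluation by $\sgn$ or by its inverse, and the precise role of the pinned vertex $v$ as simultaneously the last vertex of the $\alpha$-face $\simplex[\rho(0)]{\rho(k)}$ and the first vertex of the $\beta$-face $\simplex[\rho(k)]{\rho(k+l)}$. Once these are pinned down, the constancy of $\sgn(\rho)\sgn(\tau_k)\sgn(\tau_l)$ over each block is exactly what lets the $(k+l+1)!$ signed terms of Definition~\ref{defn:wedge} collapse, with each block $P(F,v,G)$ contributing $k!\,l!$ identical signed products, yielding the averaging formula.
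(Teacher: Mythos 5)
Your proposal is correct, and it reaches the sign identity by a genuinely different route than the paper. For part (i) you argue simple transitivity of the $i(S_k)\times j(S_l)$-action on the $k!\,l!$ orderings in $P(F,v,G)$; the paper treats this part as essentially immediate, so there is no difference of substance there. For part (ii) the paper relabels by an $\eta$ carrying $(f_0,v,g_0)$ to $(0,\ldots,k+l)$, asserts the conjugation identity $\tau=\eta^{-1}\tau_l\tau_k\eta$, and concludes because $\sgn$ is a class function compatible with the inclusions $i$ and $j$. You instead compose directly in $S_{k+l+1}$: since the block subgroup acts on \emph{positions}, carrying $(f,v,g)\leftrightarrow\rho$ to $(f_0,v,g_0)\leftrightarrow\tau$ is one-sided multiplication, $\tau_k\tau_l=\rho^{-1}\tau$ up to inverse conventions, whence $\sgn(\tau_k)\sgn(\tau_l)=\sgn(\rho)\sgn(\tau)$. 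What your route buys is precision: the relation between the two orderings really is one-sided, not a conjugation, and the extra factor $\sgn(\rho)$ you carry is not optional.

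Indeed, your computation exposes a sign slip in the paper itself. As literally printed, part (ii) cannot hold for every $(f,v,g)$ once $k\ge 2$ or $l\ge 2$: the pair $(\tau_k,\tau_l)$ is uniquely determined by (i), and as $(f,v,g)$ ranges over the block, $\sgn(\tau_k)\sgn(\tau_l)$ takes both values while $\sgn(\tau)$ is fixed. Concretely, for $k=2$, $l=0$, reference ordering $(0,1,2)$ (so $\tau=\mathrm{id}$) and $(f,v,g)=(1,0,2)$, the unique $\tau_k$ is a transposition, giving $\sgn(\tau_k)\sgn(\tau_l)=-1\neq\sgn(\tau)=+1$; your relation $\sgn(\tau_k)\sgn(\tau_l)=\sgn(\rho)\sgn(\tau)$ is what holds. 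Correspondingly, the paper's displayed identity $\tau=\eta^{-1}\tau_l\tau_k\eta$ should be the one-sided factorization above, and its sign conclusion is valid only when the reference ordering is even---which is exactly the choice made in Proposition~\ref{prop:averaging}, where the orientations are fixed so that the reference corresponds to an even permutation. The form actually used there is your block-invariant $\sgn(\rho)\sgn(\tau_k)\sgn(\tau_l)=\sgn(\tau)$, which is what lets each block of $k!\,l!$ signed cup-product terms collapse onto the common reference value. So your blind proof establishes the lemma in the form in which it is applied and repairs the statement; the only tightening needed is to fix one composition convention at the outset so the phrase ``up to a convention-dependent inverse'' can be dropped (harmless anyway, since $\sgn$ is insensitive to inverses).
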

\begin{proof}
Let $\eta \in S_{k+l+1}$ such that $(\eta(f_0), v, \eta(g_0)) = (0,\ldots, k+l)$. Then $\tau = \eta^{-1} \, \tau_l \, \tau_k\,  \eta$. Also, since the sign homomorphisms from $S_k, S_l, S_{k+l+1}$ to $Z_2$ are compatible with the inclusions $i$ and $j$ we have that $\sgn(\tau) = \sgn(\eta^{-1} \, \tau_k \, \tau_l \, \eta) = \sgn(\tau_k) \, \sgn(\tau_l)$.
\end{proof}
\begin{notation}
In the following we use the notation $f ^k \prec \sigma$ to denote a $k$-face $f$ of simplex $\sigma$, \emph{ignoring} orientations. That is, $f^k$ is just a subset of size $k+1$ of the vertices of $\sigma$. Sometimes we skip the superscript to simplify notation. For $\sigma$ an oriented simplex and $f$ an oriented face or a set of vertices, the face $\sigma \setminus f$ is an oriented face of $\sigma$ formed by deleting the vertices in face $f$ from the vertices of $\sigma$. For $g$ an oriented simplex and vertex $v$, $v \ast g$ is an  oriented simplex formed by union of $\{v\}$ with the vertex set of $g$.  As before, whenever a simplex is used in an evaluation of a cochain, for example $f$ in $\eval{\alpha}{f}$ it is assumed to be oriented. The specific orientation being used is not apparent in this notation.
\end{notation}

\begin{proposition}[Averaging interpretation of discrete wedge product]\label{prop:averaging}
  Let $\alpha \in C^k(X)$, $\beta \in C^l(X)$ and $\sigma$ be a $(k+l)$-simplex in $X$. Then the discrete wedge product of Definition~\ref{defn:wedge} is
  \begin{align}
    \eval{\alpha\wedge \beta}{\sigma} &=
                                      \frac{1}{\binom{k+l+1}{k+1}}
                                      \sum_{f^k \prec \sigma} \eval{\alpha}{f}\quad  
                                      \bigg(\frac{1}{k+1} \sum_{v^0 \prec f}
                                      \eval{\beta}{v \ast (\sigma \setminus f)} \bigg) \label{eq:outer_alpha}\\
                                    &=
                                      \frac{1}{\binom{k+l+1}{l+1}}
                                      \sum_{f^l \prec \sigma}\, 
                                      \bigg(\frac{1}{l+1} \sum_{v^0 \prec f}
                                      \eval{\alpha}{v \ast(\sigma\setminus f)}\bigg)\quad \eval{\beta}{f}\, ,
                                      \label{eq:outer_w}
  \end{align}
  where the orientations of $f$ and $v \ast (\sigma \setminus f)$ are such that the ordering $(f\setminus \{v\}, v, \sigma\setminus f)$ corresponds to an even permutation in $S_{k+l+1}$.
\end{proposition}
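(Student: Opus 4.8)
The plan is to begin from Definition~\ref{defn:wedge} and reorganize the signed sum over $S_{k+l+1}$ according to the combinatorial data each permutation records. First I would expand the cup product in \eqref{eq:wedge}, so that the summand attached to $\tau$ factors as $\sgn(\tau)\,\eval{\alpha}{\simplex[\tau(0)]{\tau(k)}}\,\eval{\beta}{\simplex[\tau(k)]{\tau(k+l)}}$. The $\alpha$-factor depends only on the first $k{+}1$ entries of $\tau$ and the $\beta$-factor only on the last $l{+}1$, and the two overlap exactly in the shared vertex $v=\tau(k)$. To each $\tau$ I therefore attach the triple $(F,v,G)$ with $F=\{\tau(0),\ldots,\tau(k-1)\}$, $v=\tau(k)$, $G=\{\tau(k+1),\ldots,\tau(k+l)\}$; this is precisely the invariant that is constant on the blocks $P(F,v,G)$ introduced before Lemma~\ref{lem:permutation}. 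Thus $S_{k+l+1}$ is partitioned into these $(k+l+1)!/(k!\,l!)$ blocks, and the whole computation reduces to evaluating one block and then summing.

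Second, I would show that a single block $P(F,v,G)$ contributes a single product. Fix the reference ordering $(f_0,v,g_0)$ selected by the orientation convention in the statement, which also fixes the oriented faces $f=F\cup\{v\}$ and $v\ast(\sigma\setminus f)=\{v\}\cup G$. For any other ordering $(f,v,g)$ in the block, Lemma~\ref{lem:permutation} produces $\tau_k\in i(S_k)$ and $\tau_l\in j(S_l)$ that restore the reference and satisfy $\sgn(\tau_k)\sgn(\tau_l)=\sgn(\tau)$. Since permuting the first $k$ entries by $\tau_k$ multiplies $\eval{\alpha}{\cdot}$ by $\sgn(\tau_k)$ and permuting the last $l$ entries by $\tau_l$ multiplies $\eval{\beta}{\cdot}$ by $\sgn(\tau_l)$, the signed summand is constant across the block and equals $\eval{\alpha}{f}\,\eval{\beta}{v\ast(\sigma\setminus f)}$ with the orientations of the statement. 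As each block has exactly $k!\,l!$ elements, its total contribution is $k!\,l!\,\eval{\alpha}{f}\,\eval{\beta}{v\ast(\sigma\setminus f)}$.

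Third comes the assembly and re-indexing. The block normalization gives the coefficient $k!\,l!/(k+l+1)!=\frac{1}{\binom{k+l+1}{k+1}}\cdot\frac{1}{k+1}$, and summing over all triples expresses $\eval{\alpha\wedge\beta}{\sigma}$ as a sum over $(F,v,G)$. Grouping these triples by the $k$-face $f=F\cup\{v\}$ --- so that $v$ ranges over the $k{+}1$ vertices of $f$ and $\sigma\setminus f=G$ --- and pulling the $v$-independent factor $\eval{\alpha}{f}$ out of the inner sum yields \eqref{eq:outer_alpha}; grouping the same triples instead by the $l$-face $G\cup\{v\}$ and reading $v\ast(\sigma\setminus f)=\{v\}\cup F$ as the argument of $\alpha$ yields \eqref{eq:outer_w}. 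Both identities thus arise from one partition, differing only in how the triple sum is bracketed; verifying the elementary identity $k!\,l!/(k+l+1)!=1/\bigl(\binom{k+l+1}{k+1}(k+1)\bigr)$ and its analogue for $l$ is routine.

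I expect the main obstacle to be the sign and orientation bookkeeping carried by Lemma~\ref{lem:permutation}, namely confirming that the reference-orientation convention makes every block contribute with a uniform sign and that the product $\eval{\alpha}{f}\,\eval{\beta}{v\ast(\sigma\setminus f)}$ is well defined. The latter holds because any two orderings of a fixed triple are related by a product $\tau_k\tau_l$, so the sign change induced in the $\alpha$-factor is exactly cancelled by that in the $\beta$-factor; only the product of the two orientations, not each separately, must be pinned down, and this is what the even-permutation condition records. I would also take care over the degenerate cases $k=0$ or $l=0$, where a face may carry no orientation freedom, to ensure the convention still selects the orientation of $v\ast(\sigma\setminus f)$ that reproduces the chosen orientation of $\sigma$.
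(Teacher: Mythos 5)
Your proof is correct and matches the paper's argument in all essentials: both partition $S_{k+l+1}$ into the blocks $P(F,v,G)$, invoke Lemma~\ref{lem:permutation} to show that each block of $k!\,l!$ permutations contributes a single signed product $\eval{\alpha}{f}\,\eval{\beta}{v\ast(\sigma\setminus f)}$, and regroup the resulting triple sum by the $k$-face (resp.\ $l$-face) with the same normalization $k!\,l!/(k+l+1)! = 1/\bigl(\binom{k+l+1}{k+1}(k+1)\bigr)$. The only difference is cosmetic: the paper runs the computation in the opposite direction, starting from the averaging formula and reducing it to Definition~\ref{defn:wedge} (and handles~\eqref{eq:outer_w} by saying the argument is similar, where you obtain it by re-bracketing the same partition), while you start from the definition.
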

\begin{proof}
  We prove the first equality. The proof for the second is similar. The RHS of~\eqref{eq:outer_alpha} can be written as a double sum, first summing over all vertices and for a fixed vertex summing over all the $(k-1)$-faces of $\sigma$ not containing that vertex to get
  \[
    \eval{\alpha\wedge \beta}{\sigma} =\frac{1}{k+l+1!} \; \sum_{v^0 \prec \sigma}
    \quad \sum_{g^{k-1} \prec (\sigma\setminus \{v\})} k! \, \eval{\alpha }{v\ast g}
    \quad l! \, \eval{\beta}{\sigma\setminus g}\, .
  \]
  The orientations of the simplices $v\ast g$ and $\sigma\setminus g$ in the cochain evaluations above are such that the ordering $(g, v, \sigma \setminus (g\cup \{ v\}))$ corresponds to an even permutation in $P (g, v, \sigma \setminus (g\cup \{ v\}))$. (We have used $g$ etc. to represent both an ordering of vertices and the corresponding set.) The fact that the orderings of $\sigma$ corresponding to all the permutations in $P (g, v, \sigma \setminus (g\cup \{ v\}))$ can be reordered to the ordering $(g, v, \sigma \setminus (g\cup \{ v\}))$  follows from Lemma~\ref{lem:permutation}. The $k!\, l!$ factorial follows from the fact that there are $k!$ orderings for $g^{k-1} \prec (\sigma \setminus \{v\})$ once a $g$ is fixed and after in addition fixing a $v$ there are $l!$ orderings for the remaining vertices. The above can be rewritten as 
  \[
    \frac{1}{k+1} \frac{1}{\binom{k+l+1}{k+1}} \; \sum_{v^0\prec \sigma}
    \quad \sum_{v^0 \ast g^{k-1} \prec \sigma} \eval{\alpha}{v\ast g}
    \quad \eval{\beta}{v \ast (\sigma \setminus (v \ast g))}\, .
  \]
Renaming $v\ast g \eqdef f$ we can rewrite this as 
  \[
    \frac{1}{k+1} \frac{1}{\binom{k+l+1}{k+1}} \;
    \sum_{v^0\prec \sigma} \quad \sum_{f^k \prec \sigma} \eval{\alpha}{f}
    \quad \eval{\beta}{v \ast (\sigma \setminus f)}\, ,
  \]
  where it is understood that $f$ is a $k$-face of $\sigma$ that must contain the vertex $v$.  This can be expressed equivalently by using Lemma~\ref{lem:permutation} once for every choice of $v$ and switching the summation and moving the normalizing factors as
  \[
    \frac{1}{\binom{k+l+1}{k+1}} \;
    \sum_{f^k \prec \sigma} \eval{\alpha}{f}\quad 
    \bigg(\frac{1}{k+1}\sum_{v^0 \prec f} \eval{\beta}{v \ast (\sigma \setminus f)}\bigg)\, ,    
  \]
  where now the vertex summation is over all vertices $v$ in $f$.

It is crucial to note here that the only reason that we have been able to collect all the $\beta$ evaluations with a single $\alpha$ evaluation is because the Lemma~\ref{lem:permutation} can be used once for every choice of $v$ once the face $f$ and its orientation have been fixed.
\end{proof}

The following is Example 5.8 in~\cite{Wilson2007} and demonstrates the well known fact that the DEC wedge product and Wilson's cochain product are not associative. We use the averaging interpretation for the computation.
\begin{example}\label{ex:associative}
    Let $X$ be the edge $[01]$, $\alpha, \beta \in C^0(X)$. Let $\alpha_0, \beta_0 = 1,0$, $\alpha_1, \beta_1 = 0,1$ and $\omega_{01} = 1$. Then 
    \[
    \Eval{(\alpha \wedge \beta)\wedge \omega}{[01]} = 
    \frac{\alpha_0 \beta_0 + \alpha_1 \beta_1}2 \, \omega_{01} = 0\, .
    \]
    On the other hand,
    \[
    \Eval{\alpha \wedge (\beta\wedge \omega)}{[01]} = 
    \frac{\alpha_0 + \alpha_1}2
    \left(\frac{\beta_0+\beta_1}2 \, \omega_{01}\right) = \frac{1}4\, .
    \]
\end{example}

\section{Conclusion}
In this paper we have developed a categorical perspective on Discrete Exterior Calculus. The earlier work on DEC focused on discretizing the \emph{objects} of exterior calculus (namely differential forms), operators on forms and domains on which these form live. In contrast, in Section~\ref{sec:naturality} we have promoted a discretization of \emph{morphisms} of exterior calculus (namely smooth maps). We also gave an intuitive averaging interpretation of the discrete wedge which generalizes an averaging interpretation that was known in a special case of the discrete wedge of a 1-cochain with 0-cochain.

As proved in Proposition~\ref{prop:wilson}, the anti-symmetrized cup product that is used as a discrete wedge product in DEC is equal to the cochain product of Wilson~\cite{Wilson2007}. Then by a theorem of Sullivan~\cite{TrZeSu2007} (see also \cite[Theorem~5.11]{Wilson2007}) there exists a local construction which canonically extends the algebra of DEC to a $C_\infty$-algebra. 
Under mesh refinement, this $C_\infty$-algebra converges to the algebra given by wedge product on forms, see~\cite[Theorem 5.12]{Wilson2007} for details. This suggests a direction for making the combinatorial wedge product of DEC more accurate by including more and more combinatorial operators of the $C_\infty$-algebra.

\section{Funding and/or Conflicts of Interests/Competing Interests}

ANH was supported in part by NSF DMS-2208581. DBE was supported in part by  NSF DMS-2205835. Conflict of Interest: The authors declare that they have no conflict of interest.

\bigskip
\textbf{Acknowledgement:} We thank Scott Wilson for discussions and for pointing out the connections to $C_\infty$-algebra and its implications.

\bibliography{wedge}
\bibliographystyle{acmdoi}

\end{document}